\documentclass[11pt, letterpaper,reqno]{amsart}
\usepackage[left=1in,right=1in,bottom=0.5in,top=0.8in]{geometry}
\usepackage{amsfonts}
\usepackage{amsmath, amssymb}
\usepackage{graphicx}
\usepackage[font=small,labelfont=bf]{caption}
\usepackage{epstopdf}
\usepackage{xcolor}
\usepackage{amsthm}
\usepackage{float}
\usepackage{pgfplots}
\usepackage{listings}
\usepackage{longtable}
\usepackage{mathrsfs}
\usepackage[pdfpagelabels,hyperindex]{hyperref}
\hypersetup{linkbordercolor=green}

\makeatletter
\newcommand*{\rom}[1]{\expandafter\@slowromancap\romannumeral #1@}
\makeatother

\newtheorem{thm}{Theorem}[section]

\newtheorem{lem}[thm]{Lemma}
\newtheorem{cond}[thm]{Condition}
\newtheorem{conj}[thm]{Conjecture}

\theoremstyle{definition}

\theoremstyle{remark}

\colorlet{shadecolor}{gray!20}
\pgfplotsset{compat=1.9}
\def \p {\mathbb{P}}
\def \c {\mathbb{C}}
\def \p {\mathbb{P}}
\def \a {\alpha}
\def \z {\mathbb{Z}}
\def \pa {\partial}
\def \r {\mathbb{R}}
\def \o {\mathcal{O}}
\def \con {\text{con}}
\usepgflibrary{fpu}
\makeatletter
\let\c@equation\c@thm
\raggedbottom
\let\NAT@parse\undefined
\makeatother

\title{Gamma conjecture \rom{1} for blowing up $\p^n$ along $\p^r$}
\author{Zongrui Yang}
\address{Department of Mathematics\\
                Columbia University\\
New York, NY, 10027}
\email{zy2417@columbia.edu}
\begin{document}
\begin{abstract}
Consider the Fano manifold $X$ formed by blowing up $\p^n$ along its linear subspace $\p^r$, we check the conifold conditions \cite{GI, G} for its mirror Laurent polynomial $f$, which can imply that $X$ satisfies both Conjecture $\o$ and Gamma conjecture \rom{1} by Galkin-Golyshev-Iritani \cite{GGI}.
\end{abstract}
\maketitle
\section{Introduction and the Main result}\label{intro}
Let $X$ be a Fano manifold. Consider the unital commutative ring $(H(X),\star_0)$, where $H(X)=H^{even}(X)$ and $\star_0$ is the small quantum product. Galkin-Golyshev-Iritani \cite{GGI} conjectured that the eigenvalues of the linear operator $(c_1(X)\star_0): H(X)\rightarrow H(X)$ has some interesting properties:
\begin{conj}[Conjecture $\o$]
Let $\rho$ be the Fano index of $X$, and $$T=\sup\left\{|u|:u\in\text{Spec}(c_1(X)\star_0)\right\}$$ be the spectral radius of $(c_1(X)\star_0)$. Then
\begin{enumerate}
    \item [(1)] $T$ is an eigenvalue of $(c_1(X)\star_0)$ of multiplicity one
    \item [(2)] If $u$ is an eigenvalue of $(c_1(X)\star_0)$ with $|u|=T$ then $u=\zeta T$ for some $\zeta^\rho=1$
\end{enumerate}
\end{conj}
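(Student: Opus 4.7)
The plan is to apply the main theorem of Galkin--Golyshev--Iritani \cite{GGI}: if the Landau--Ginzburg mirror $f$ of a Fano manifold $X$ satisfies the conifold conditions of \cite{GI,G}, then both Conjecture $\o$ and Gamma conjecture \rom{1} hold for $X$. The key observation is that $X=\mathrm{Bl}_{\p^r}\p^n$ is a smooth toric Fano manifold — the standard torus action on $\p^n$ preserving the linear subspace $\p^r$ lifts to the blow-up — so its mirror is the Givental--Hori--Vafa Laurent polynomial $f$ on $(\c^*)^n$ built from the primitive ray generators of the fan of $X$. Under this mirror correspondence, the eigenvalues of $(c_1(X)\star_0)$ match the critical values of $f$, so Conjecture $\o$ becomes a statement about the critical values of $f$.

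First I would write $f$ explicitly. The rays of the fan of $X$ consist of the $n+1$ rays of the fan of $\p^n$ together with one extra ray for the exceptional divisor over $\p^r$, and the resulting monomials group naturally according to the direct-sum splitting of the cocharacter lattice compatible with the subspace being blown up. Using the residual torus symmetry inherited from this splitting, a symmetric ansatz should reduce the critical-point system of $\log f$ on $(\c^*)^n$ to a single one-variable polynomial equation $P(t)=0$ whose roots parametrize the critical values. From this reduction I would isolate a distinguished positive real critical point $p_{\con}$ with $f(p_{\con})=T>0$, check that the Hessian of $\log f$ at $p_{\con}$ is non-degenerate (so that $T$ is a simple critical value), and show that every other critical value $u$ either satisfies $|u|<T$ or equals $\zeta T$ for some $\zeta^\rho=1$ arising from the $\mu_\rho$-symmetry associated to the Fano index $\rho$ of $X$.

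The $\mu_\rho$-symmetry part is essentially formal: the obvious action of $\mu_\rho$ on $(\c^*)^n$ scales $f$ by $\rho$-th roots of unity, permutes critical points in $\mu_\rho$-orbits, and thus accounts for property (2). The main obstacle will be property (1), namely bounding the moduli of the non-conifold critical values in the last step. Concretely, this amounts to sharp real-root analysis and complex-modulus estimates for the roots of the reduced polynomial $P(t)$: one typically uses Descartes' rule of signs to isolate $T$ as the unique positive real root of largest modulus, together with a Rouch\'e-type comparison or a direct amplitude estimate on the other factors of $P(t)$ to confine the remaining roots to $\{|u|<T\}$. Once the conifold conditions are verified by this analysis, the GGI criterion immediately yields both parts of Conjecture $\o$ for $X$, and simultaneously Gamma conjecture \rom{1}.
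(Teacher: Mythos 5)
Your overall strategy is exactly the paper's: pass to the Hori--Vafa mirror $f(x)=x_1+\dots+x_n+\frac{1}{x_1\cdots x_n}+\frac{1}{x_1\cdots x_{r+1}}$, use the symmetric ansatz to collapse the critical-point system to one variable, and verify the conifold conditions of \cite{GI,G}. Indeed, with $k=r+1$, $m=n-r-1$, the reduction gives $x_1=\dots=x_k=x^{m+1}+x$, $x_{k+1}=\dots=x_n=x$ with $u(x)=x^{m+1}(x^{m+1}+x)^k-1=0$, and the critical values are $g(\a)=(k+1)\a^{m+1}+(k+m+1)\a$ over roots $\a$ of $u$. But the two places where you declare the work ``formal'' or ``typical'' are precisely where the substance lies, and the tools you name would not carry it.

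First, the modulus bound. You propose Descartes/Rouch\'e to show the conifold root is the positive root of \emph{largest} modulus and to confine the others to a smaller disk. That is backwards here: one shows easily from $|u(\a)+1|=1$ and monotonicity of $u$ on $(0,\infty)$ that \emph{every} root satisfies $|\a|\geq r_+$, so $r_+$ is the root of \emph{smallest} modulus. Moreover $|g(\a)|$ is not a monotone function of $|\a|$, so no disk-confinement statement about roots of $u$ translates directly into $|g(\a)|\leq T_{\con}$. The paper's actual mechanism is to use the defining equation to rewrite $|\a^{m+1}+\a|=|\a|^{-(m+1)/k}$, dominate $|g(\a)|$ by the auxiliary function $h(x)=(k+1)x^{-(m+1)/k}+mx$, observe $h$ is decreasing exactly on $(0,r_0)$ with $r_0=\bigl(\tfrac{(m+1)(k+1)}{mk}\bigr)^{k/(m+k+1)}$, and then prove the upper bound $|\a|<r_0$ by a four-case elementary analysis; only the combination $r_+\leq|\a|<r_0$ yields $|g(\a)|\leq h(|\a|)\leq h(r_+)=T_{\con}$. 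Second, property (2) is not a formal consequence of the $\mu_\rho$-symmetry: that symmetry shows each $\zeta T$ with $\zeta^\rho=1$ \emph{is} a critical value, which is the wrong direction. What (2) requires is that nothing else on the circle $|u|=T$ occurs, and this comes from the equality case of the triangle inequalities: $|g(\a)|=T_{\con}$ forces $|\a|=r_+$ and $|\a^{m+1}+\a|=|\a|^{m+1}+|\a|$, hence $\a=\zeta_m^d r_+$ with $\zeta_m=e^{2\pi i/m}$, and substituting back into $u(\a)=0$ forces $m\mid(k+1)d$, which is what makes $\zeta_m^d$ a $\rho$-th root of unity for $\rho=\gcd(m,k+1)$. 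Without these two arguments your outline does not close.
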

Gamma conjecture \rom{1} by Galkin-Golyshev-Iritani \cite{GGI} relates Gromov-Witten invariants to the topology of $X$. We briefly review one of its formulation as follows. Suppose $X$ satisfies conjecture $\o$, then the restriction $J_X(t)$ of Givental's big $J$-function to the anti-canonical line admits an asymptotic expansion at infinity, whose leading term $A_X\in H(X)$ is called the principal asymptotic class of $X$, which is conjectured to coincide (up to a scalar multiple) with the Gamma class 
$$\hat{\Gamma}_X=\prod_{i=1}^n\Gamma(1+x_i), \quad x_1,\dots,x_n \text{ are the Chern roots of }TX,\quad\Gamma(z)=\int_0^\infty e^tt^{z-1}dt$$
\begin{conj}[Gamma conjecture \rom{1}]
Suppose $X$ is a Fano manifold satisfying Conjecrue $\o$. Then there exists a constant $C\in\c$ such that $\hat{\Gamma}_X=CA_X$.
\end{conj}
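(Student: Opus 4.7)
The plan is to reduce the conjecture to identifying, inside Iritani's Gamma-integral structure on the quantum D-module, the particular flat section singled out by Conjecture $\o$. First I would regard $J_X(t)$ as a solution of the quantum differential equation on the anti-canonical line in $H^2(X;\c)$, with $t$ an affine parameter. At $t=\infty$ this equation has an irregular singularity whose Stokes filtration is governed by the spectrum of $c_1(X)\star_0$; under Conjecture $\o$ the Hukuhara--Levelt--Turrittin decomposition produces a unique rank-one formal summand $e^{-Tt}\bigl(A_X + O(t^{-1})\bigr)$ attached to the simple maximal eigenvalue $T$, and this summand defines $A_X$ intrinsically. The conjecture is then the assertion that this distinguished summand has leading cohomological coefficient proportional to $\hat{\Gamma}_X$.

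Second, I would invoke Iritani's theorem, which furnishes a $\z$-lattice of flat sections $s_E$ indexed by $E\in K(X)$, whose leading asymptotic is a universal constant times $\hat{\Gamma}_X \cup (2\pi i)^{\deg/2}\mathrm{Ch}(E)$. In this language Gamma conjecture \rom{1} is equivalent to the assertion that the distinguished section attached to $T$ corresponds to $E=[\o_X]$: its Chern character is $1$, and so $A_X \propto \hat{\Gamma}_X$ follows immediately. To establish the matching one turns to a Landau--Ginzburg mirror $(Y,W)$ of $X$ and identifies the Lefschetz thimble through the critical point of value $T$ with the object $[\o_X]\in D^b(X)$ under homological mirror symmetry; its oscillatory integral, evaluated by stationary-phase analysis, then yields precisely the required flat section with the desired leading term.

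The hard part is the last identification, and it is the reason Gamma conjecture \rom{1} is open for arbitrary Fano $X$: no canonical recipe is known, independent of concrete data, that matches the thimble through the maximal critical value of a mirror with the structure sheaf in full generality. In concrete cases one circumvents this by verifying the Galkin--Golyshev--Iritani \emph{conifold conditions} on an explicit Laurent-polynomial mirror $f$, which is the strategy adopted in the remainder of this paper for $X=\mathrm{Bl}_{\p^r}\p^n$. A fully general proof would require new geometric input, for instance a canonical construction of the critical point attached to $T$ available for every Fano manifold, or a purely cohomological characterization of $A_X$ that avoids an explicit mirror altogether.
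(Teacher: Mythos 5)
The statement you were asked about is Gamma conjecture \rom{1} itself, which the paper quotes from Galkin--Golyshev--Iritani \cite{GGI} as an open conjecture; the paper contains no proof of it in this generality, so there is no internal argument to compare yours against. Your text correctly recognizes this: it is not a proof but an accurate description of the standard framework. Your account of how $A_X$ is defined (the rank-one formal summand attached to the simple maximal eigenvalue $T$ under Conjecture $\o$), of Iritani's $\Gamma$-integral structure with flat sections indexed by $K(X)$ whose leading asymptotics involve $\hat{\Gamma}_X\cup(2\pi i)^{\deg/2}\mathrm{Ch}(E)$, and of the reformulation of Gamma conjecture \rom{1} as the statement that the distinguished section attached to $T$ corresponds to $[\o_X]$, all match \cite{GGI,GI}. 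You also correctly identify where the general argument breaks down (no canonical matching of the thimble through the maximal critical value with the structure sheaf) and correctly identify the paper's actual strategy: for the specific toric Fano $X=\mathrm{Bl}_{\p^r}\p^n$ one writes down the explicit mirror Laurent polynomial $f$ and verifies the conifold conditions of Condition~\ref{cond}, which by \cite[section 6]{GI} imply Conjecture $\o$ and Gamma conjecture \rom{1} for $X$; the mathematical content of the paper is the verification of those conditions in Theorem~\ref{thm}. In short, your proposal contains no error, but it should not be presented as a proof of the conjecture; the only thing that can be (and is) proved here is the special case obtained by checking Condition~\ref{cond} for the explicit mirror.
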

Conjecture $\o$ and Gamma conjecture \rom{1} have been checked for several cases of Fano manifolds, including complex Grassmannians \cite{GGI}, del Pezzo surfaces \cite{HKLY}, projective complete intersections \cite{K} and Fano $3$-folds of Picard number one \cite{GZ}.

We review the results in Galkin-Iritani \cite[section 6]{GI}. Suppose $X$ is an $n$-dimensional Fano toric manifold. A mirror for $X$ is the Laurent polynomial \cite{Ghomo,Gmirror,HV}
$$
    f(x)=x^{b_1}+x^{b_2}+\dots+x^{b_m}
$$
where $x=(x_1,\dots,x_n)\in(\c^{*})^n$, $b_1,\dots,b_m\in\z^n$ are primitive generators of $1$-dimensional cones of the toric fan of $X$ and $x^{b_i}=x_1^{b_{i1}}\dots x_n^{b_{in}}$ for $b_i=(b_{i1},\dots,b_{in})$. By mirror symmetry \cite{Gmirror,I}, the small quantum cohomology ring $(H (X),\star_0)$ is isomorphic to the Jacobian ring of $f$,
 and the first Chern class $c_1(X)$ corresponds to the class of $f$ in Jacobian ring under this isomorphism. Therefore the set of eigenvalues of $(c_1(X)\star_0)$ coincides with the set of critical values of $f$, and their multiplicities coincide. Moreover, the restriction $f_{(\r_{>0})^n}$ is a real function on $(\r_{>0})^n$ that admits a global minimum at a unique point $x_{\con}\in(\r_{>0})^n$, called the conifold point \cite{GGI,GI,G}. 
 Hence the following conditions on the mirror Laurent polynomial $f$ implies conjecture $\o$ for $X$:
 \begin{cond}\label{cond}
Let $X$ be a Fano toric manifold with Fano index $\rho$ and $f$ be its mirror Laurent polynomial. Let $T_\con=f(x_\con)$ be the value of $f$ at the conifold point.
\begin{enumerate}
    \item [(1)] every critical value $u$ of $f$ satisfies $|u|\leq T_\con$
    \item [(2)] the conifold point is the unique critical point of $f$ contained in $f^{-1}(T_\con)$
    \item [(3)] for any critical value $u$ of $f$ with $|u|=T_\con$, we have $u=\zeta T_\con$, where $\zeta^\rho=1$
\end{enumerate}
\end{cond}
Galkin-Iritani \cite[section 6]{GI} proved that conditions $(1),(2)$ above imply Gamma conjecture \rom{1} for the toric Fano manifold $X$. 

From now on we suppose $X$ is the blow up of $\p^n$ along its linear subspace $\p^r$. From \cite{LCS}, $X$ is a toric Fano variety whose toric fan has $1$-dimensional cones generated by primitive generators
$$\left\{e_1,\dots,e_n,-(e_1+\dots+e_n),-(e_1+\dots+e_{r+1})\right\}\subset\z^n$$
where $e_i=(0,\dots,\overset{i\text{th}}{1},\dots,0)$, $i=1,\dots,n$.
Hence the mirror Laurent polynomial for $X$ is 
$$
    f(x_1,\dots,x_n)=x_1+\dots+x_n+\frac{1}{x_1\dots x_n}+\frac{1}{x_1\dots x_{r+1}}
$$
We denote $k=r+1$, $m=n-r-1$, then $m,k\geq 1$. We need to analyze the critical values of $f$. Solve the critical point equations $\frac{\pa f}{\pa x_i}=0$ for $i=1,\dots,m+k$, we get 
$$
    x_{k+1}=\dots=x_{m+k}\overset{\text{def}}{=}x
$$
$$
    x_1=\dots=x_k=x^{m+1}+x
$$
and $x$ has to satisfy
$$
    u(x)=x^{m+1}(x^{m+1}+x)^k-1=0
$$
The critical value is
$$
    f(x_1,\dots,x_n)=(k+1)x^{m+1}+(k+m+1)x\overset{\text{def}}{=}g(x)
$$
Denote the only positive real root of $u(x)$ by $r_+$. Then the conifold point is 
$$
    x_{\con}=(\underbrace{r_+^{m+1}+r_+,\dots,r_+^{m+1}+r_+}_{k},\underbrace{r_+,\dots,r_+}_{m})
$$
and the corresponding critical value is
$$
    T_{\con}=g(r_+)=(k+1)r_+^{m+1}+(k+m+1)r_+
$$
Those $g(\a)$ for complex roots $\a$ of $u(x)$ are precisely the critical values of $f$.
\begin{thm}\label{thm}
For any complex root $\a$ of $u(x)$ we have $|g(\a)|\leq T_{\con}$. The equality holds only if $\a=\zeta_m^dr_+$, where $\zeta_m=e^{\frac{2\pi i}{m}}$ and $d\in\z$, in which case $m|(k+1)d$ and $g(\a)=\zeta_m^dT_{\con}$.
\end{thm}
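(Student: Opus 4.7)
The plan is to work in polar coordinates $\alpha=\rho e^{i\theta}$, use the modulus of the critical equation $u(\alpha)=0$ to eliminate the angular variable from $|g(\alpha)|^2$, and then reduce everything to a one-variable monotonicity question.

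First I would factor $\alpha^{m+1}+\alpha=\alpha(\alpha^m+1)$ and square the modulus of $\alpha^{m+1}(\alpha^{m+1}+\alpha)^k=1$ to get
\[
\rho^{2(m+k+1)}\bigl(\rho^{2m}+2\rho^m\cos(m\theta)+1\bigr)^k = 1,
\]
which determines $c:=\cos(m\theta)$ as an explicit function of $\rho$:
\[
c = \frac{\rho^{-2(m+k+1)/k}-\rho^{2m}-1}{2\rho^m}.
\]
Expanding $|g(\alpha)|^2=\rho^2|(k+1)\alpha^m+(k+m+1)|^2$ as a quadratic in $c$ and substituting this expression makes the $c$-dependence drop out, leaving the clean one-variable formula
\[
|g(\alpha)|^2 = G(\rho):=(k+1)(k+m+1)\rho^{-2(m+1)/k}-(k+1)m\,\rho^{2(m+1)}+m(k+m+1)\rho^2,
\]
and a short computation using $r_+^{-2(m+1)/k}=(r_+^{m+1}+r_+)^2$ verifies $G(r_+)=T_\con^2$. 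The reality condition $c\le 1$ rearranges to $\rho^{m+k+1}(\rho^m+1)^k\ge 1$, which by monotonicity of the left-hand side forces $\rho\ge r_+$, so the whole theorem will follow once I show $G$ is strictly decreasing on $(0,\infty)$.

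That monotonicity is the main technical step. Differentiating gives
\[
\frac{G'(\rho)}{2\rho} = m(k+m+1) - (k+1)(m+1)\!\left[\frac{k+m+1}{k}\rho^{-2(m+k+1)/k}+m\rho^{2m}\right],
\]
so $G'<0$ is equivalent to the uniform-in-$\rho$ inequality
\[
\frac{k+m+1}{k}\rho^{-2(m+k+1)/k}+m\rho^{2m} > \frac{m(k+m+1)}{(m+1)(k+1)}.
\]
I would attack this with weighted AM--GM using weights $w_1=\frac{mk}{(m+1)(k+1)}$ and $w_2=\frac{m+k+1}{(m+1)(k+1)}$ chosen precisely so that the two $\rho$-powers cancel; the bound then reduces to the purely arithmetic inequality $(mk)^{w_1}(m+k+1)^{w_2}<(m+1)(k+1)$. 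Concavity of $\log$ pushes the left side below $w_1\cdot mk+w_2\cdot(m+k+1)$, at which point the entire claim collapses to the pleasant identity
\[
(m+1)^2(k+1)^2-(mk)^2-(m+k+1)^2 = 2mk(m+k+1) > 0.
\]

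Finally, for the equality clause: strict monotonicity of $G$ forces $|\alpha|=r_+$, whence $c=1$ from the explicit formula above, so $\theta=2\pi d/m$ for some $d\in\z$ and $\alpha=\zeta_m^d r_+$. Substituting back, $u(\zeta_m^d r_+)=\zeta_m^{d(k+1)}r_+^{m+1}(r_+^{m+1}+r_+)^k-1=\zeta_m^{d(k+1)}-1$, so such an $\alpha$ is actually a root of $u$ exactly when $m\mid d(k+1)$, and $g(\zeta_m^d r_+)=\zeta_m^d T_\con$ then follows by direct substitution using $\zeta_m^{dm}=1$.
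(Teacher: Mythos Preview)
Your proof is correct and takes a genuinely different route from the paper's.

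The paper bounds $|g(\alpha)|$ via the triangle inequality: writing $g(\alpha)=(k+1)(\alpha^{m+1}+\alpha)+m\alpha$ and using $|\alpha^{m+1}+\alpha|=|\alpha|^{-(m+1)/k}$ (from the modulus of the critical equation), it gets $|g(\alpha)|\le h(|\alpha|)$ with $h(x)=(k+1)x^{-(m+1)/k}+mx$. This $h$ is only decreasing on $(0,r_0)$ with $r_0=\bigl((m+1)(k+1)/(mk)\bigr)^{k/(m+k+1)}$, so the paper must prove a separate lemma that every root satisfies $|\alpha|<r_0$; that lemma is handled by a four-way case split on $(m,k)$ with some ad hoc numerical checks. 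Your approach instead computes $|g(\alpha)|^2$ \emph{exactly} as a function $G(\rho)$ of $\rho=|\alpha|$ by eliminating $\cos(m\theta)$ via the modulus of the critical equation, and then shows $G$ is strictly decreasing on \emph{all} of $(0,\infty)$ through a single weighted AM--GM that reduces to the factorization $(m+1)^2(k+1)^2=(mk+(m+k+1))^2=(mk)^2+(m+k+1)^2+2mk(m+k+1)$. This trades the triangle-inequality simplicity for a tighter identity, and the payoff is that the case analysis and the auxiliary bound $|\alpha|<r_0$ disappear entirely. The equality analyses in the two proofs are essentially the same.
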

Remark that $K_X=-(m+k+1)i^*(H)+mE$, where $H$ is the hyperplane class on $\p^n$ and $E$ is the exceptional class. If the equality in theorem \ref{thm} holds we have $m|(k+1)d$, then it is easy to see $m|\rho d$, hence $(\zeta)^\rho=(\zeta_m^d)^\rho=1$, where $\rho$ is the Fano index of $X$. Hence condition \ref{cond} holds for the mirror Laurent polynomial $f$ of $X$. We conclude the following:
\begin{thm}
Conjecture $\o$ and Gamma conjecture \rom{1} hold for the blow up of $\p^n$ along $\p^r$.
\end{thm}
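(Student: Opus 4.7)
The plan is to express $|g(\a)|^2$ as a function of $s := |\a|$ alone by exploiting the equation $u(\a)=0$, and then to show this function is strictly decreasing on $(0,\infty)$, so that $|g(\a)|^2$ attains its maximum over critical points $\a$ at the conifold value $s = r_+$.

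First, I would rewrite $u(\a)=0$ as $\a^{m+k+1}(\a^m+1)^k = 1$ and take moduli to obtain the key identity $|\a^m+1| = s^{-(m+k+1)/k}$. Combining this with the triangle inequality $|\a^m+1| \le s^m+1$ yields $u(s) \ge 0$; since $u$ is strictly increasing on $(0,\infty)$, this gives $s \ge r_+$, with equality iff $\a^m$ is a non-negative real. Next, by expanding $|g(\a)|^2 = s^2 |(k+1)\a^m + (k+m+1)|^2$ and substituting $2\operatorname{Re}(\a^m) = |\a^m+1|^2 - s^{2m} - 1 = s^{-2(m+k+1)/k} - s^{2m} - 1$, one obtains the closed form
$$|g(\a)|^2 = \Phi(s) := -m(k+1) s^{2m+2} + (k+1)(k+m+1) s^{-2(m+1)/k} + m(k+m+1) s^2.$$
The relation $r_+^m + 1 = r_+^{-(m+k+1)/k}$ then lets one verify $\Phi(r_+) = T_\con^2$ by direct substitution.

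The core step is to show that $\Phi$ is strictly decreasing on $(0,\infty)$. Writing $\Phi'(s) = 2s\,Q(s)$, the function $Q$ is a linear combination of $s^{2m}$, $s^{-2(m+k+1)/k}$, and a constant, with a unique critical point $s_0$ (a maximum) determined by $s_0^{(m+1)(k+1)/k} = (m+k+1)/(mk)$. Substituting this relation into $Q(s_0)$ and setting $v := (m+k+1)/(mk) > 0$, the inequality $Q(s_0) < 0$ reduces to the clean log inequality $(1+v)\log(1+v) > v\log v$. This holds for every $v > 0$ by a sign-and-monotonicity analysis of $x \mapsto x\log x$: when $v \le 1$ the left side is positive while the right side is $\le 0$, and when $v > 1$ both $v$ and $1+v$ lie in the region $[1/e,\infty)$ where $x\log x$ is strictly increasing. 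Thus $Q < 0$ on $(0,\infty)$, so $\Phi' < 0$ there, giving $\Phi(s) < T_\con^2$ for all $s > r_+$.

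For the equality case, $|g(\a)| = T_\con$ forces $\Phi(s) = T_\con^2$, hence $s = r_+$; and tightness of the triangle inequality then forces $\a^m = r_+^m$, so $\a = r_+ \zeta_m^d$ for some $d \in \z$. Substituting back into $\a^{m+k+1}(\a^m+1)^k = 1$ yields $\zeta_m^{d(k+1)} = 1$, i.e., $m \mid (k+1)d$, and a direct computation gives $g(\a) = \zeta_m^d T_\con$. The hard part will be the monotonicity of $\Phi$: reducing $Q(s_0) < 0$ to the log inequality involves a careful AM-GM-type manipulation of the critical-point relation, while the other steps amount to routine algebra or standard complex-analytic facts.
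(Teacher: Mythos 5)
Your proposal is correct, and it reaches the paper's key estimate (Theorem \ref{thm}) by a genuinely different route. The paper bounds $|g(\a)|$ by the triangle inequality, $|g(\a)|\le (k+1)|\a|^{-(m+1)/k}+m|\a|=h(|\a|)$, and then must confine $|\a|$ to the interval $[r_+,r_0)$ on which $h$ is decreasing; the upper bound $|\a|<r_0$ is its key lemma, proved by a four-case analysis in $(m,k)$ with several numerical checks. You instead observe that for a root of $u$ the modulus of the critical value is an \emph{exact} function of $s=|\a|$: using $|\a^m+1|=s^{-(m+k+1)/k}$ to eliminate $\operatorname{Re}(\a^m)$ gives $|g(\a)|^2=\Phi(s)$ with the closed form you state (the algebra checks out, including $\Phi(r_+)=T_{\con}^2$), so the only inequality you need is $s\ge r_+$, which is the same first step as in the paper. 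Global monotonicity of $\Phi$ then replaces the case analysis by one uniform inequality: writing $\Phi'(s)=2sQ(s)$, the unique maximum of $Q$ is at $s_0$ with $s_0^{(m+1)(k+1)/k}=(m+k+1)/(mk)=v$, and substitution indeed reduces $Q(s_0)<0$ to $(1+v)^2>v^{2v/(1+v)}$, i.e.\ $(1+v)\log(1+v)>v\log v$, valid for all $v>0$ by your $x\log x$ argument; I verified this reduction. Your route needs no upper bound on $|\a|$ and no case-splitting, and it gives the slightly sharper statement that equality $|g(\a)|=T_{\con}$ occurs exactly when $|\a|=r_+$; your equality analysis ($\a=\zeta_m^d r_+$, $m\mid(k+1)d$, $g(\a)=\zeta_m^d T_{\con}$) coincides with the paper's. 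To conclude the theorem as stated you still need, exactly as the paper does, the cited mirror-symmetry identification of the quantum spectrum with the critical values of $f$, the Galkin--Iritani implication from Condition \ref{cond} to the two conjectures, and the one-line remark that $m\mid(k+1)d$ forces $\zeta_m^{d\rho}=1$ for the Fano index $\rho$; these framework steps are unchanged and routine.
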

\section{The Proof of Theorem \ref{thm}}
In the rest of the paper we denote any complex root of $u(x)$ by $\a$.
Observe that since $u(x)$ is strictly increasing on $(0,\infty)$ and 
$$
    u(|\a|)+1=|\a|^{m+1}(|\a|^{m+1}+|\a|)^k\geq|\a^{m+1}(\a^{m+1}+\a)^k|=|u(\a)+1|=1=u(r_+)+1
$$
we have $|\a|\geq r_+$.

Define 
$$
    h(x)=(k+1)x^{-\frac{m+1}{k}}+mx, \quad x\in(0,\infty)
$$

then $h(x)$ is strictly decreasing on $(0,r_0)$ and strictly incrasing on $(r_0,\infty)$, where 
$$
    r_0=(\frac{(m+1)(k+1)}{mk})^{\frac{k}{m+k+1}}
$$

 It is clear that $r_0>1>r_+$. If the lemma below holds, then for any complex root $\a$ of $u(x)$, we have $r_+\leq|\a|\leq r_0$, so 
$$
    |g(\a)|\leq (k+1)|\a^{m+1}+\a|+m|\a|=
    (k+1)|\a|^{-\frac{m+1}{k}}+m|\a|=h(|\a|)\leq h(r_+)=g(r_+)=T_{\con}
$$
and we conclude the inequality part of the theorem.
\begin{lem}
Any complex root $\a$ of $u(x)$ satisfies $|\a|< r_0$.
\end{lem}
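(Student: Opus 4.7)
The plan is to rewrite the equation $u(\alpha)=0$ in the cleaner form
\[ \alpha^{m+k+1}(\alpha^m+1)^k = 1 \]
by factoring $\alpha^{m+1}+\alpha = \alpha(\alpha^m+1)$; passing to moduli yields $|\alpha|^{m+k+1}|\alpha^m+1|^k = 1$. Since $r_0>1$, the case $|\alpha|\leq 1$ is immediate, so I would focus on $s := |\alpha| > 1$. Here $|\alpha|^m>1$, and the reverse triangle inequality gives $|\alpha^m+1|\geq s^m-1 > 0$. Substituting into the equality produces the scalar inequality $\phi(s) \leq 1$, where $\phi(t) := t^{m+k+1}(t^m-1)^k$ is strictly increasing on $(1,\infty)$ with $\phi(1)=0$. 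The lemma thereby reduces to verifying the single inequality $\phi(r_0) > 1$.

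Next I would unwind $\phi(r_0)$ using the defining relation $r_0^{(m+k+1)/k} = (m+1)(k+1)/(mk) =: c$ (which is just $h'(r_0)=0$ rewritten). This yields $r_0^{m+k+1} = c^k$ and $r_0^m = c^{mk/(m+k+1)}$, so $\phi(r_0)>1$ simplifies to $c^{(m+1)(k+1)/(m+k+1)} > c+1$. Writing $a = (m+1)(k+1)$, $b = mk$, and $t = b/a \in (0,1)$, and noting $a-b=m+k+1$, raising both sides to the $(a-b)$-th power and taking $a$-th roots converts this into
\[ t^t(1+t)^{1-t} < 1, \qquad \text{equivalently,} \qquad t \ln t + (1-t)\ln(1+t) < 0. \]

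This last inequality is immediate from strict concavity of $\ln$: applying Jensen to weights $(t,1-t)$ at the distinct points $t$ and $1+t$ (whose weighted mean is exactly $t\cdot t + (1-t)(1+t) = 1$) gives
\[ t \ln t + (1-t)\ln(1+t) < \ln 1 = 0, \]
with strict inequality since $t \neq 1+t$. The main obstacle I anticipate is the algebraic book-keeping in the second paragraph: the explicit form of $r_0$ and the exponent $(m+1)(k+1)/(m+k+1)$ look forbidding, and spotting that $m$ and $k$ should be bundled into the single ratio $t = mk/[(m+1)(k+1)]$---whereupon the inequality becomes a textbook Jensen statement at two distinct points whose weighted average is exactly $1$---is the crucial recognition.
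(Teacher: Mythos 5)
Your proof is correct, and while it opens exactly as the paper does --- reverse triangle inequality on $|\a^m+1|$, monotonicity of $\phi(t)=t^{m+k+1}(t^m-1)^k$ on $(1,\infty)$, reduction to the single inequality $\phi(r_0)>1$ --- it then diverges in a genuinely useful way. The paper proves $\phi(r_0)>1$ (in the equivalent form $c\bigl(c^{mk/(m+k+1)}-1\bigr)>1$ with $c=\frac{(m+1)(k+1)}{mk}$) by a four-case analysis: the Bernoulli estimate $(1+x)^\gamma\geq 1+\gamma x$ only applies when $\gamma=\frac{mk}{m+k+1}\geq 1$, i.e.\ $(m-1)(k-1)\geq 2$, which forces separate treatment of $(m,k)=(2,2)$, of $m=1$, and of $k=1$ (the last via an auxiliary polynomial $v$ and direct numerical checks for small $m$). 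Your substitution $t=mk/[(m+1)(k+1)]\in(0,1)$ turns the same inequality into $t\ln t+(1-t)\ln(1+t)<0$, which follows in one line from strict concavity of $\ln$ because the weighted average $t\cdot t+(1-t)(1+t)$ equals exactly $1$; I checked the exponent bookkeeping ($1+\frac{mk}{m+k+1}=\frac{(m+1)(k+1)}{m+k+1}$, and the passage to $t^t(1+t)^{1-t}<1$ via the $(a-b)$-th power and $a$-th root) and it is all correct. The payoff is a uniform, case-free proof; what you give up is only the elementary flavor of the paper's direct numerical verifications. One small point worth making explicit when you write this up: on $(1,\infty)$ both factors of $\phi$ are positive and strictly increasing, so $\phi$ is strictly increasing there, and $\phi(s)\leq 1<\phi(r_0)$ indeed forces $s<r_0$.
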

\begin{proof} 
We suppose on the contradiction that $|\a|\geq r_0>1$. Then
\begin{equation*}
\begin{split}
&1=|\a|^{m+1}|\a^{m+1}+\a|^k\\
&\geq|\a|^{m+k+1}(|\a|^m-1)^k\\
&\geq r_0^{m+k+1}(r_0^m-1)^k\\
&=(\frac{(m+1)(k+1)}{mk})^{k}((\frac{(m+1)(k+1)}{mk})^{\frac{km}{m+k+1}}-1)^k
\end{split}
\end{equation*}
Take exponential $\frac{1}{k}$ on both side, we get
$$
1\geq\frac{(m+1)(k+1)}{mk}((\frac{(m+1)(k+1)}{mk})^{\frac{km}{m+k+1}}-1)
$$
We divide into four cases
\begin{enumerate}
    \item [(\rom{1})] Case $(m-1)(k-1)\geq 2$. 
We have 
\begin{equation*}
    \begin{split}
        &1\geq\frac{(m+1)(k+1)}{mk}((\frac{(m+1)(k+1)}{mk})^{\frac{km}{m+k+1}}-1)\\
        &\geq \frac{(m+1)(k+1)}{mk}\frac{km}{m+k+1}\frac{m+k+1}{mk}>1
    \end{split}
\end{equation*}
contradiction, where we used $(1+x)^\gamma\geq 1+x\gamma$ for $\gamma\geq 1$.
    \item[(\rom{2})] Case $(m,k)=(2,2)$. We have
    $$1\geq(\frac{9}{4})((\frac{9}{4})^{\frac{4}{5}}-1)>1$$
    contradiction.
    \item [(\rom{3})]Case $m=1$, $k\geq 2$. We have $$1\geq\frac{2(k+1)}{k}(
    (\frac{2(k+1)}{k})^\frac{k}{k+2}-1)>1$$
    contradiction, where we directly check the inequality for $1\leq k\leq 5$, and for $k\geq 6$ we use
    $$\frac{2(k+1)}{k}(
    (\frac{2(k+1)}{k})^\frac{k}{k+2}-1)>2(2^{\frac{k}{k+2}}-1)>1$$
    \item [(\rom{4})] Case $k=1$. We have $u(x)=x^{2m+2}+x^{m+2}-1$ and $r_0=(\frac{2(m+1)}{m})^{\frac{1}{m+2}}$. Define $$v(x)=x^{2m+2}-x^{m+2}-1$$ then $v(x)$ is strictly increasing on $(1,\infty)$ and has a unique root $r_-$ in $(1,\infty)$. We again suppose on the contradiction that $|\a|\geq r_0>1$. Then $$1=|\a|^{m+2}|\a^m+1|\geq |\a|^{m+2}(|\a|^m-1)=v(|\a|)+1$$
    hence $|\a|\leq r_-$, in particular $r_0\leq r_-$. But on the other hand we have 
    $$v(r_0)=(\frac{2(m+1)}{m})^{\frac{2m+2}{m+2}}-\frac{2(m+1)}{m}-1>0=v(r_-)$$
    where we directly check the inequality for $m=1,2$ and for $m\geq 3$ we use $$v(r_0)>v(2^{\frac{1}{m+2}})=2^{\frac{2m+2}{m+2}}-3>0$$ 
    Hence $r_-<r_0$ by $v(x)$ being strictly increasing on $(1,\infty)$, contradiction.
\end{enumerate}
\end{proof}
Next we analyze the equality conditions. This happens when $|\a|=r_+$. Plug it in $\a^{m+1}(\a^{m+1}+\a)^k=1=r_+^{m+1}(r_+^{m+1}+r_+)^k$ we know that $|\a^{m+1}+\a|=r_+^{m+1}+r_+$, hence $\a^{m+1}$ differs by a positive real multiple with $\a$, therefore $\a=\zeta_m^dr_+$ for some $d\in\z$, where $\zeta_m=e^{\frac{2\pi i}{m}}$. We again plug it in $\a^{m+1}(\a^{m+1}+\a)^k=1$ and get $m|(k+1)d$. In this case $g(\a)=(k+1)\a^{m+1}+(k+m+1)\a=\zeta_m^dg(r_+)=\zeta_m^dT_{\con}$. The theorem is proved.
\section{Acknowledgement}
We acknowledge Professor Gang Tian for introducing me to Gamma conjecture and Yihan Wang for some discussions in the proof.

\end{document}